\newtheorem{theorem}{Theorem}[section]
\newtheorem{lemma}[theorem]{Lemma}
\theoremstyle{definition}
\newtheorem{remark}[theorem]{Remark}
\numberwithin{equation}{section}
\begin{document}

\date{}
\title[Sumsets of convex sets]
{On a theorem of Schoen and Shkredov on sumsets of convex sets}

\author{ Liangpan Li}

\address{Department of Mathematical Sciences,
Loughborough University, LE11 3TU, UK }
 \email{liliangpan@gmail.com}

\subjclass[2000]{11B75}

\keywords{sumset, productset, convex set, energy,
Szemer\'{e}di-Trotter theorem}

\date{}

\begin{abstract}
A set of reals $A=\{a_1,\ldots,a_n\}$ labeled in increasing order is
called convex if there exists a continuous strictly convex function
$f$ such that $f(i)=a_i$ for every $i$. Given a convex set $A$, we
prove
\[|A+A|\gg\frac{|A|^{14/9}}{(\log|A|)^{2/9}}.\]
Sumsets of different summands and an application to a
sum-product-type problem are also studied either as remarks or as
theorems.

\end{abstract}

\maketitle

%\setstretch{1.05}

\section{Introduction}

Let $A=\{a_1,\ldots,a_n\}$ be a set of real numbers  labeled in
increasing order. We say that $A$ is \textsf{convex} if there exists
a continuous strictly convex function $f$ such that $f(i)=a_i$ for
every $i$. Hegyv\'{a}ri (\cite{Hegyvari}), confirming a conjecture
of Erd\H{o}s, proved that if $A$ is convex then
\[|A-A|\gg|A|\cdot\frac{\log|A|}{\log\log|A|},\]
where ``$\gg$" is the Vinogradov notation. This result was later
improved by many authors, see for example
\cite{ElekesNathansonRuzsa,Garaev,Garaev2005,Iosevich,Konyagin,Solymosi}
for related results. Recently, Schoen and Shkredov
(\cite{SchoenShkredov2}), combining
 an energy-type equality (\cite{SchoenShkredov1})
\begin{align}\label{prototype}
E_3(A)=\sum_sE\big(A, A\cap(A+s)\big),
\end{align}
a useful set inclusion relation (see e.g. \cite{Katz,Sanders
1,Sanders 2,Schoen,SchoenShkredov1})
\begin{align}
|(A+A)\cap(A+A+s)|\geq|A+(A\cap(A+s))|,
\end{align}
and an application (see Lemma \ref{Main Lemma} below) of the
Szemer\'{e}di-Trotter incidence theorem (see e.g.
\cite{Kaplan,Szekely,SzemerediTrotter}), proved for convex sets the
following best currently known lower bounds:
\begin{align}
|A+A|&\gg\frac{|A|^{14/9}}{(\log|A|)^{2/3}},\label{formula 1.1}\\
|A-A|&\gg\frac{|A|^{8/5}}{(\log|A|)^{2/5}}.\label{best}
\end{align}
We also remark that Solymosi and Szemer\'{e}di obtained a similar
result for convex sets, establishing $|A\pm A|\gg|A|^{1.5+\delta}$
for some universal constant $\delta>0$.

The purpose of this note is twofold. Firstly, we give a slight
improvement of (\ref{formula 1.1}) as follows:
\begin{theorem}\label{main theorem}
Let $A$ be a convex set.  Then
\begin{align}\label{ggggggggg}
|A+A|\gg\frac{|A|^{14/9}}{(\log|A|)^{2/9}}.
\end{align}
\end{theorem}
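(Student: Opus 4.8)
The plan is to run the Schoen--Shkredov machinery but to be more economical with the logarithmic losses. Write $A_s:=A\cap(A+s)$ and $\sigma(s):=|A_s|$, so that the energy identity \eqref{prototype} reads $E_3(A)=\sum_s E(A,A_s)$ while $\sum_s\sigma(s)=|A|^2$. The backbone of the argument is the chain
\[
|(A+A)\cap(A+A+s)|\ \ge\ |A+A_s|\ \ge\ \frac{|A|^2\sigma(s)^2}{E(A,A_s)},
\]
whose first inequality is the stated set--inclusion and whose second is Cauchy--Schwarz (the bound $|X+Y|\ge|X|^2|Y|^2/E(X,Y)$). Summing over $s$ and using $\sum_s|(A+A)\cap(A+A+s)|=|A+A|^2$ converts a lower bound for the right--hand side into the sought lower bound for $|A+A|$.

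First I would record what the naive global estimate gives. Applying Cauchy--Schwarz to $\sum_s\sigma(s)=|A|^2$ yields $|A+A|\gg|A|^3/E_3(A)^{1/2}$, which is by itself too weak: the diagonal term $s=0$ already forces $E_3(A)\ge\sigma(0)^3=|A|^3$, so this never beats $|A|^{3/2}$. The whole point is therefore to excise the diagonal and, more generally, to localise to the scale of $s$ that actually carries the mass. Thus I would dyadically decompose the nonzero frequencies into level sets $\Delta_\tau:=\{s\ne0:\ \sigma(s)\sim\tau\}$ and argue one level at a time, so that $\sigma(s)\sim\tau$ is essentially constant on $\Delta_\tau$.

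On a fixed level the chain above, combined with Cauchy--Schwarz in the form $\bigl(\sum_{s\in\Delta_\tau}\sigma(s)\bigr)^2\le\bigl(\sum_{s\in\Delta_\tau}\sigma(s)^2/D_s\bigr)\bigl(\sum_{s\in\Delta_\tau}D_s\bigr)$ with $D_s:=|(A+A)\cap(A+A+s)|$, reduces everything to an upper bound for the partial third energy $\sum_{s\in\Delta_\tau}E(A,A_s)$. This is exactly where convexity enters: Lemma \ref{Main Lemma}, the Szemer\'{e}di--Trotter estimate for the level sets of $s\mapsto|A\cap(A+s)|$, bounds this partial energy well below the trivial $\sum_{s\in\Delta_\tau}E(A,A_s)\ll|\Delta_\tau|\,|A|\,\tau^2$, and feeding this sharper bound back and then optimising over the scale $\tau$ is what produces the exponent $14/9$.

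The main obstacle, and the place where the improvement over \eqref{formula 1.1} lives, is the logarithmic bookkeeping rather than any single inequality. A crude pigeonholing onto one popular level $\Delta_\tau$ costs a full power of $\log|A|$, and propagating such losses through the Cauchy--Schwarz steps is what leads to the exponent $2/3$ in \eqref{formula 1.1}. I would instead sum the level--by--level estimates by H\"{o}lder with the exponents dictated by the Main Lemma, so that the logarithm enters only through the number of genuinely contributing scales and with the least possible power; a careful such summation is what should replace $(\log|A|)^{2/3}$ by $(\log|A|)^{2/9}$ and give \eqref{ggggggggg}. I expect the delicate point to be precisely this interplay between the dyadic summation and the sharp form of the Szemer\'{e}di--Trotter input.
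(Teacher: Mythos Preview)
Your plan does not match the paper's argument, and as written it has a genuine gap rather than just a different route. The backbone you propose uses the \emph{unweighted} sum $\sum_s|(A+A)\cap(A+A+s)|=|A+A|^2$ together with the Cauchy--Schwarz split $(\sum_{s\in\Delta_\tau}\sigma(s))^2\le(\sum\sigma^2/D_s)(\sum D_s)$. Even with the sharpest pointwise input $E(A,A_s)\ll|A|\,\sigma(s)^{3/2}$ from Lemma~\ref{corollary 2.3}, this only yields $\sum_{s\in\Delta_\tau}D_s\gg|A|\,\tau^{1/2}N_\tau$, and summing the levels gives $|A+A|^2\gg|A|\,E_{1/2}(A)$; bounding $E_{1/2}$ below via $E_1,E_2$ and $E_2(A)\ll|A|^{5/2}$ never beats the exponent $3/2$. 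The exponent $14/9$ is \emph{not} obtained from the identity $\sum_sD_s=|A+A|^2$ at all, so no amount of careful H\"older bookkeeping across scales will rescue it.

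What the paper does instead is use the \emph{weighted} sum $\sum_s|(A+A)_s|\,\sigma(s)=E(A,A+A)$, together with a second application of convexity (Lemma~\ref{corollary 2.3} with $B=A+A$) to obtain $E(A,A+A)\ll|A|\,|A+A|^{3/2}$. Concretely, the Cauchy--Schwarz is weighted by $\sigma(s)$, which gives the clean moment inequality of Lemma~\ref{lemma 25},
\[
E_{1.5}(A)^2\,|A|^2\ \le\ \Big(\sum_sE(A_s,A)\Big)\cdot E(A,A+A)\ =\ E_3(A)\cdot E(A,A+A),
\]
and this is combined with the log-free relation $E_2(A)^3\ll|A|^3E_{1.5}(A)^2$ (again from Lemma~\ref{Main Lemma}). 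There is no dyadic pigeonholing anywhere in the main chain; the single logarithm enters exactly once through $E_3(A)\ll|A|^3\log|A|$, and that is what produces the exponent $2/9$. So the improvement over \eqref{formula 1.1} is not a matter of tighter summation across level sets, but of replacing $\sum_sD_s=|A+A|^2$ by $\sum_sD_s\sigma(s)=E(A,A+A)\ll|A||A+A|^{3/2}$ and organising the argument through the moments $E_{1.5},E_2,E_3$.
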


Secondly, and most importantly, we will address an application of
the Schoen-Shkredov estimate to a sum-product-type problem.
Erd\H{o}s and Szemer\'{e}di (\cite{Erdos}) once conjectured that the
size of either  the sumset or the productset of an arbitrary set of
the reals must be very large, see \cite{Solymosi2008} for the best
currently known result toward this conjecture and related references
therein. Another type of problem than one can attack regarding
sumset and productset is to assume either one is very small, then
prove the other one is very large. Elekes and Ruzsa
(\cite{ElekesRuzsa}, see also \cite{LiShen,Solymosi2008}) proved
that if the sumset of a set is very small, then its productset must
be very large. On the other hand, if  the productset of a set is
very small, say for example $|AA|\leq M|A|$, then the best currently
known lower bound for the size of its sumset (\cite{Elekes}, see
also \cite{ElekesNathansonRuzsa,LiShen,Solymosi2008}) only is
$|A+A|\geq C_M|A|^{3/2}$.

Roughly speaking, we will show that a set with very small
multiplicative doubling is a ``convex" set. Consequently, we can
derive the following improvement.
\begin{theorem}\label{main theorem 2}
Suppose $|AA|\leq M|A|$.  Then
\begin{align*}
|A+A|&\gg_M \frac{|A|^{14/9}}{(\log|A|)^{2/9}},\\
|A-A|&\gg_M \frac{|A|^{8/5}}{(\log|A|)^{2/5}}.
\end{align*}
\end{theorem}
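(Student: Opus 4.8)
The plan is to show that small multiplicative doubling can be substituted for convexity throughout the Schoen--Shkredov argument. Both convex-set bounds \eqref{formula 1.1} and \eqref{best} are obtained by combining the energy identity \eqref{prototype}, the set-inclusion relation displayed just after it, and the Szemer\'edi--Trotter input isolated in Lemma~\ref{Main Lemma}; convexity of $A$ is invoked only in the last of these. Consequently it suffices to prove that the conclusion of Lemma~\ref{Main Lemma} continues to hold, at the cost of a multiplicative constant depending on $M$, whenever $|AA|\le M|A|$. Granting this, one copies the Schoen--Shkredov derivations verbatim and reads off both displayed estimates of Theorem~\ref{main theorem 2}.

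First I would normalise. Discarding $0$ and splitting $A$ into its positive and negative parts, one part $A'$ has $|A'|\gg|A|$ and $|A'A'|\le|AA|\le M|A|\ll M|A'|$; replacing $A$ by $A'$ (negated if it is the negative part, which preserves all product-set sizes) we may assume $A\subset\mathbb{R}_{>0}$ with $|AA|\ll_M|A|$. Passing to logarithms, $B=\log A$ satisfies $B+B=\log(AA)$, hence $|B+B|=|AA|\ll_M|B|$, so $B$ has small additive doubling; by the Pl\"unnecke--Ruzsa inequalities every iterated sumset $|kB-\ell B|\ll_{M,k,\ell}|B|=|A|$, which is the only quantitative consequence of the doubling hypothesis that I expect to use.

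The substance is then to recover Lemma~\ref{Main Lemma}. Here I would exploit the duality, going back to Elekes and to Elekes--Nathanson--Ruzsa, between convexity and the logarithm: the Szemer\'edi--Trotter argument behind Lemma~\ref{Main Lemma} does not really need $A$ itself to be convex, only that the relevant point configuration lie on translates or dilates of the graph of one fixed strictly convex function. For a convex set this function is the $f$ with $f(i)=a_i$; for our set I would instead use the exponential, writing each $a\in A$ as $e^{b}$ with $b\in B$ and replacing the additive curves of the original proof by the multiplicative ones $\{(x,y):xy=c\}$, i.e. the images of lines under $\exp$. Two distinct such curves meet in a bounded number of points, so they form a family of pseudolines and the Szemer\'edi--Trotter theorem applies with the same exponents; the number of points and curves involved is governed by $|AA|$ and the iterated product sets, which is exactly where the factor depending on $M$ enters. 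Assembling these incidence bounds yields the analogue of Lemma~\ref{Main Lemma} for $A$.

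I expect the main obstacle to be precisely this last transfer: one must set up the point--curve incidence problem so that its output matches, exponent for exponent, the bound that Lemma~\ref{Main Lemma} supplies in the convex case, while ensuring that the Pl\"unnecke--Ruzsa passage costs only a fixed power of $M$ rather than degrading the $14/9$ and $8/5$ exponents. Verifying the pseudoline property in the exact form needed, and bookkeeping the various iterated product sets so that each application of Szemer\'edi--Trotter sees the correct number of objects, is where the real care is required; everything downstream of Lemma~\ref{Main Lemma} is then a mechanical repetition of the Schoen--Shkredov computation.
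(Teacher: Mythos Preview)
Your overall strategy matches the paper's: observe that convexity enters only through Lemma~\ref{Main Lemma}, write $A=\exp(Z)$ with $Z=\log A$ having small additive doubling, prove an $M$-lossy version of that lemma, and then rerun the Schoen--Shkredov computation verbatim. The paper's execution of the key step is, however, more direct than what you sketch. It proves the generalised lemma for any $A=f(Z)$ with $f$ strictly convex and $|Z+Z|\le M|Z|$, using \emph{translates} of the graph of $f$ (here $f=\exp$) as the pseudoline system and $(Z+Z)\times(A-B)$ as the point set---exactly the setup behind Lemma~\ref{Main Lemma}, with $\{1,\dots,n\}$ replaced by $Z$. The only new wrinkle is a short popularity argument: if $\delta_{A,B}(x)\ge\tau$, then the $\tau$ translates $z_i+Z$ together occupy $\tau|Z|$ points inside $Z+Z$, a set of size at most $M|Z|$, so at least $|Z|/2$ values of $s\in Z+Z$ have $(s,x)\in\mathcal{P}_{\tau/(2M)}$. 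Only the raw bound $|Z+Z|\le M|Z|$ is used---no Pl\"unnecke--Ruzsa and no iterated sumsets---and the total loss is a clean factor of $M^3$. Your proposed detour through hyperbolas $\{xy=c\}$ is unnecessary and would not directly encode the additive representation function $\delta_{A,B}$; staying with translates of $G(\exp)$ is both simpler and keeps the argument a line-for-line parallel of the convex case.
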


We remark that one can find direct application of Theorem \ref{main
theorem 2} to the main result in \cite{Li}, in which multi-fold sums
from a set with  very small multiplicative doubling are studied. See
also \cite{BourgainChang,Chang,CrootHart} for some related
discussions on multi-fold sumsets.

%We can also make a slight improvement of (\ref{formula 1.1}).

%\begin{theorem}\label{main theorem 2}
%Let $A$ be a convex set. Then
%\begin{align}
%\label{formula16}|A+A|\gg\frac{|A|^{14/9}}{(\log|A|)^{2/9}}.
%%\end{theorem}

%Sharp lower bounds for $|A\pm A|$ were conjectured in
%\cite{Hegyvari} and \cite{Iosevich} as $|A\pm
%A|\gg|A|^{2-\epsilon}$.

We collect some notations used throughout this note.  Denote by
$\delta_{A, B}(s)$ the number of representations of $s$ in the form
$a-b$, $a\in A$, $b\in B$. If $A=B$ we write
$\delta_{A}(s)=\delta_{A, A}(s)$ for simplicity. Furthermore, put
$$E(A,B)=\sum_s\delta_A(s)\delta_B(s)=\sum_s\delta_{A, B}(s)^2$$
and $$E_k(A)=\sum_s\delta_A(s)^k.$$ Let $A_s=A\cap(A+s)$. All
logarithms are to base 2. \textsf{All sets are finite subsets of
real numbers}.

\section{Convexity and energy estimates}

\begin{lemma}[\cite{SchoenShkredov2}]\label{Main Lemma}
Let $A$ be a convex set. Then for any set $B$ and any $\tau\geq1$ we
have
\[\big|\{x\in A-B:\delta_{A, B}(x)\geq\tau\}\big|\ll\frac{|A|\cdot|B|^2}{\tau^3}.\]
\end{lemma}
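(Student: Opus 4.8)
The plan is to rephrase the statement as a bound on a level set and then feed it into the Szemer\'{e}di--Trotter theorem. Writing $D=\{x\in A-B:\delta_{A,B}(x)\geq\tau\}$ and observing that $\delta_{A,B}(x)=|A\cap(B+x)|$ counts the points of $A$ lying in the translate $B+x$, the goal becomes $|D|\ll|A|\,|B|^2/\tau^3$. The exponent $\tau^{-3}$ together with the complete absence of a logarithmic factor is the tell-tale signature of the rich-lines form of Szemer\'{e}di--Trotter: a set of $N$ points in the plane determines $\ll N^2/\tau^3+N/\tau$ lines incident to at least $\tau$ of them. I would deliberately avoid trying to extract the bound from a moment estimate, since summing the conclusion dyadically only produces $E_3$-type quantities that genuinely carry a logarithm (for instance $\sum_x\delta_A(x)^3\asymp|A|^3\log|A|$ in the case $B=A$); the lemma is strictly the clean output of the incidence bound and must therefore be proved directly rather than through $E_k(A)$.

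First I would record the single place convexity is to be used: writing $a_i=f(i)$ with $f$ continuous and strictly convex, the consecutive gaps $a_{i+1}-a_i$ are strictly increasing, equivalently $f'$ is strictly monotone. The delicate point is that the naive encoding is degenerate. Indeed, for a fixed $x$ the admissible indices are exactly those $i$ with $a_i$ in the translate $B+x$, so each $x$ corresponds to a translate of one fixed configuration; as $x$ varies one merely sweeps out disjoint (parallel) translates, and an incidence count built this way collapses to the trivial estimate $|D|\leq|A|\,|B|/\tau$. The heart of the argument is thus to replace this degenerate one-parameter picture by a genuine two-parameter family of lines, indexed so that each $x\in D$ yields a distinct line carrying $\gg\tau$ points of a planar point set whose cardinality $N$ the bookkeeping forces to be $\asymp|A|^{1/2}|B|$ (so that $N^2/\tau^3$ matches the target), and then to invoke the strict monotonicity of the gaps to guarantee that these lines are in sufficiently general position, with no heavy concurrences and no line repeated. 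It is exactly this monotonicity, and not the mere injectivity of $f$, that rules out the parallel-translate degeneracy and makes the incidence bound effective.

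With such a configuration in hand the endgame is routine: the rich-lines bound gives $|D|\ll N^2/\tau^3+N/\tau$, the first term furnishing the desired $|A|\,|B|^2/\tau^3$, while the lower-order term $N/\tau$ is dominated throughout the range in which the estimate is nontrivial, and the remaining small-$\tau$ regime is covered by the trivial bound $|D|\leq|A-B|\leq|A|\,|B|$. I expect the genuine obstacle to be none of this balancing but the construction of the incidence itself: pinning down the point set and the line $\ell_x$ attached to each $x$, verifying from the monotonicity of $f'$ that $x\mapsto\ell_x$ is injective with the lines spread out enough for Szemer\'{e}di--Trotter to apply without loss, and checking that the right powers of $|A|$, $|B|$ and $\tau$ emerge from trading the size of the point set against the guaranteed richness.
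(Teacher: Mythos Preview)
Your diagnosis is right in two important respects: the lemma is indeed the raw output of Szemer\'edi--Trotter (no moment argument, no logarithm), and the one-parameter encoding by vertical translates of the graph of $f$ is genuinely degenerate. Where the plan goes off track is the proposed repair. You posit a point set of size $N\asymp|A|^{1/2}|B|$ together with a single $\tau$-rich line $\ell_x$ attached to each $x\in D$, having reverse-engineered $N$ from the target $N^2/\tau^3=|A|\,|B|^2/\tau^3$. No such configuration is in sight---there is no natural object here of size $|A|^{1/2}$---and you concede as much by calling the construction the ``genuine obstacle'' and leaving it undone. That is the gap: the proof never actually builds the incidence, and the numerology it aims for does not correspond to any realizable picture.

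The argument the paper relies on (not reproduced for this lemma, but carried out in full for its generalisation in Section~4) runs \emph{dually} to your sketch. One takes the \emph{curves} to be the two-parameter family $L_{z,b}=G(f)+(z,-b)$ indexed by $(z,b)\in[n]\times B$, so that $|\mathcal L|=|A|\,|B|$; strict convexity forces any two of these translates to meet in at most one point, so Szemer\'edi--Trotter gives at most $\ll|A|^2|B|^2/\tau^3$ points that are $\tau$-rich for $\mathcal L$. The step your outline is missing is an \emph{amplification on the point side}: rather than a single rich line per $x\in D$, each such $x$ manufactures $\gtrsim|A|$ rich \emph{points} $(s,x)$ with $s$ ranging over a positive proportion of $[2n]$ (a popularity argument over the shifts $s$). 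Dividing the rich-point bound $|A|^2|B|^2/\tau^3$ by this factor $|A|$ is precisely what yields $|A|\,|B|^2/\tau^3$. So the correct bookkeeping is $|A|\,|B|$ curves plus an $|A|$-fold amplification, not $|A|^{1/2}|B|$ points and one line per $x$.
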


A special case of Lemma \ref{Main Lemma} for $B=-A$ was established
in \cite{Iosevich}. As applications, we have the following two
lemmas.

\begin{lemma}[\cite{SchoenShkredov2}]\label{lemma 2.2}
Let $A$ be a convex set. Then $E_3(A)\ll|A|^3\cdot\log|A|$.
\end{lemma}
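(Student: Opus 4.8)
The plan is to deduce this directly from Lemma \ref{Main Lemma} by a dyadic decomposition of the level sets of $\delta_A$. First I would record that, since $\delta_A(s)=\delta_{A,A}(s)=|A\cap(A+s)|$, one has $E_3(A)=\sum_s\delta_A(s)^3$ with the sum effectively ranging over $s\in A-A$. Applying Lemma \ref{Main Lemma} with $B=A$ (legitimate because $A$ is convex) gives the key level-set estimate
\[D(\tau):=\big|\{s:\delta_A(s)\geq\tau\}\big|\ll\frac{|A|\cdot|A|^2}{\tau^3}=\frac{|A|^3}{\tau^3}\]
valid for every $\tau\geq1$.

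Next I would split the range of $\delta_A$ dyadically. For $j\geq0$ set $S_j=\{s:2^j\leq\delta_A(s)<2^{j+1}\}$; since $\delta_A(s)\leq|A|$ for all $s$, only the indices $0\leq j\leq\log|A|$ contribute, so there are $O(\log|A|)$ nonempty classes. On each class I bound $\delta_A(s)^3<2^{3(j+1)}$ and $|S_j|\leq D(2^j)\ll|A|^3/2^{3j}$, whence
\[\sum_{s\in S_j}\delta_A(s)^3\leq 2^{3(j+1)}|S_j|\ll 2^{3j}\cdot\frac{|A|^3}{2^{3j}}=|A|^3.\]
Summing the $O(\log|A|)$ contributions then yields $E_3(A)\ll|A|^3\log|A|$, as claimed.

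The computation is essentially routine once Lemma \ref{Main Lemma} is in hand, so there is no serious obstacle; the only point requiring a little care is the bookkeeping of the dyadic scales, and in particular the observation that the logarithmic loss is exactly the number $O(\log|A|)$ of scales between $1$ and the trivial pointwise bound $\delta_A(s)\leq|A|$. The very smallest values of $\delta_A$ could alternatively be absorbed using the identity $\sum_s\delta_A(s)=|A|^2\ll|A|^3$, but the dyadic sum already handles them. It is worth noting that every dyadic block contributes the same order $|A|^3$, which is precisely what forces the single logarithmic factor and suggests that this factor is difficult to remove by this method alone.
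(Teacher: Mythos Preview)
Your argument is correct. The paper itself does not give a proof of this lemma; it merely records it as a result of Schoen and Shkredov, observing that it is an application of Lemma~\ref{Main Lemma}. Your dyadic decomposition of the level sets of $\delta_A$ is precisely the standard way to pass from the level-set bound $D(\tau)\ll|A|^3/\tau^3$ to the third-moment estimate, and it is entirely in the spirit of the dyadic argument the paper does spell out in the proof of Lemma~\ref{corollary 2.3}.
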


\begin{lemma}\label{corollary 2.3}
Let $A$ be a convex set. Then for any set $B$ we have $E(A,
B)\ll|A|\cdot|B|^{1.5}$.
\end{lemma}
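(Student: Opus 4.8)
The plan is to estimate $E(A,B)=\sum_x\delta_{A,B}(x)^2$ by a dyadic decomposition of the level sets of the representation function $\delta_{A,B}$, feeding the level-set bound of Lemma \ref{Main Lemma} into the sum. For each integer $j\geq0$ set $D_j=\{x\in A-B:2^j\leq\delta_{A,B}(x)<2^{j+1}\}$, so that
\[E(A,B)=\sum_{j\geq0}\sum_{x\in D_j}\delta_{A,B}(x)^2\ll\sum_{j\geq0}4^j|D_j|.\]
The whole argument then rests on bounding $|D_j|$ in two different regimes and balancing the resulting contributions.

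First I would use two estimates for $|D_j|$. The trivial $L^1$ bound: since $\sum_x\delta_{A,B}(x)=|A|\cdot|B|$ and every $x\in D_j$ contributes at least $2^j$, one gets $|D_j|\ll|A|\cdot|B|\cdot2^{-j}$. The second, stronger for large $j$, comes from Lemma \ref{Main Lemma} with $\tau=2^j$, giving $|D_j|\ll|A|\cdot|B|^2\cdot8^{-j}$. Multiplying each by $4^j$, the first contributes $2^j|A|\cdot|B|$ (increasing in $j$) while the second contributes $|A|\cdot|B|^2\cdot2^{-j}$ (decreasing in $j$).

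These two weighted bounds cross at $4^j=|B|$, i.e. at $j_0\approx\tfrac12\log|B|$, so the natural strategy is to use the trivial bound for $j\leq j_0$ and Lemma \ref{Main Lemma} for $j>j_0$. On the low range $\sum_{j\leq j_0}2^j|A|\cdot|B|$ is a geometric sum dominated by its top term $2^{j_0}|A|\cdot|B|\asymp|A|\cdot|B|^{3/2}$; on the high range $\sum_{j>j_0}|A|\cdot|B|^2\cdot2^{-j}$ is a decreasing geometric sum dominated by its leading term $|A|\cdot|B|^2\cdot2^{-j_0}\asymp|A|\cdot|B|^{3/2}$. Adding the two halves yields $E(A,B)\ll|A|\cdot|B|^{3/2}$, as claimed.

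The computation is essentially mechanical once the setup is in place, so I do not expect a serious obstacle. The only point requiring care is that neither bound alone suffices: Lemma \ref{Main Lemma} summed over all $j$ gives only $\ll|A|\cdot|B|^2$, losing a factor of $|B|^{1/2}$, while the trivial bound summed over all $j$ blows up at the top of the range. The crux is therefore recognizing that the two bounds must be combined and that both geometric series peak precisely at the crossover $j_0$.
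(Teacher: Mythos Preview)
Your proof is correct and follows essentially the same approach as the paper: a dyadic decomposition of the level sets of $\delta_{A,B}$, combining the trivial $L^1$ bound below a threshold with Lemma~\ref{Main Lemma} above it. The only cosmetic difference is that the paper sets the threshold implicitly as $\triangle=E(A,B)/(2|A||B|)$ and then solves the resulting self-referential inequality $E(A,B)\ll|A||B|^2/\triangle$, whereas you locate the crossover $2^{j_0}\asymp|B|^{1/2}$ explicitly.
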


\begin{proof}
Let $\triangle\doteq\frac{E(A, B)}{2|A||B|}$ and we divide $E(A,B)$
into two parts, one is
\[\sum_{s:\delta_{A, B}(s)<\triangle}\delta_{A, B}(s)^2,\]
which is obviously less than half of $E(A, B)$, thus results in the
other part
\[\sum_{s:\delta_{A, B}(s)\geq\triangle}\delta_{A, B}(s)^2,\]
being bigger than  half of $E(A, B)$. Therefore, by Lemma \ref{Main
Lemma} and a dyadic argument,
\[\frac{E(A, B)}{2}\leq\sum_{s:\delta_{A, B}(s)\geq\triangle}\delta_{A, B}(s)^2\ll\sum_{j\geq1}\triangle^2\cdot2^{2j}\cdot
\frac{|A|\cdot|B|^2}{\triangle^3\cdot2^{3j}}\leq\frac{|A|\cdot|B|^2}{\triangle}.\]
This finishes the proof.
\end{proof}

%The next lemma is a variant of (\ref{prototype}).

\begin{lemma}\label{lemma 24} Let $A,B$ be any sets. Then
\[\sum_sE(A_s,B)\leq E_3(A)^{2/3}\cdot E_3(B)^{1/3}.\]
\end{lemma}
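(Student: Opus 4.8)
The plan is to unfold every energy in terms of the difference–representation functions $\delta$, interchange the order of summation so that the sum over $s$ only touches the inner sets $A_s$, and thereby collapse the left-hand side to a single weighted sum $\sum_t\delta_A(t)^2\delta_B(t)$; Hölder's inequality then produces the claimed bound immediately.

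First I would expand the definition $E(A_s,B)=\sum_t\delta_{A_s}(t)\,\delta_B(t)$ and swap summations (all terms are nonnegative, so this is harmless):
\[\sum_s E(A_s,B)=\sum_s\sum_t\delta_{A_s}(t)\,\delta_B(t)=\sum_t\delta_B(t)\sum_s\delta_{A_s}(t).\]
This isolates the quantity $\sum_s\delta_{A_s}(t)$, for which I claim the clean identity $\sum_s\delta_{A_s}(t)=\delta_A(t)^2$.

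The core of the argument is this counting identity, and it is the one place that genuinely needs care. Since $x\in A_s$ means exactly $x\in A$ and $x-s\in A$, the quantity $\sum_s\delta_{A_s}(t)$ counts triples $(x,y,s)$ with $x,y\in A_s$ and $x-y=t$, i.e. with $x,y,x-s,y-s\in A$ and $x-y=t$. Setting $a=x$, $b=y$, $c=x-s$, $d=y-s$, these conditions become $a,b,c,d\in A$, $a-b=t$, and $c-d=(x-s)-(y-s)=x-y=t$, while $s=a-c$ is recovered uniquely; conversely, any $(a,b,c,d)\in A^4$ with $a-b=c-d=t$ determines a valid $(x,y,s)$, since the consistency requirement $a-c=b-d$ is automatic from $a-b=c-d$. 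Hence the map is a bijection and
\[\sum_s\delta_{A_s}(t)=\big|\{(a,b)\in A^2:a-b=t\}\big|\cdot\big|\{(c,d)\in A^2:c-d=t\}\big|=\delta_A(t)^2.\]

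Substituting back gives $\sum_s E(A_s,B)=\sum_t\delta_A(t)^2\delta_B(t)$, and the lemma follows from Hölder's inequality with exponents $\nicefrac{3}{2}$ and $3$:
\[\sum_t\delta_A(t)^2\delta_B(t)\leq\Big(\sum_t\delta_A(t)^3\Big)^{2/3}\Big(\sum_t\delta_B(t)^3\Big)^{1/3}=E_3(A)^{2/3}\cdot E_3(B)^{1/3}.\]
Everything after the counting identity is routine (nonnegative Fubini and a single application of Hölder), so I expect the only real work to be verifying that the change of variables above is an honest bijection; once that is in place there is no further obstacle.
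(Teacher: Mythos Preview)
Your proof is correct and follows essentially the same route as the paper: expand $E(A_s,B)$, collapse to $\sum_t\delta_A(t)^2\delta_B(t)$, then apply H\"older. The only cosmetic difference is in justifying the identity $\sum_s\delta_{A_s}(t)=\delta_A(t)^2$: the paper observes the symmetry $\delta_{A_s}(t)=\delta_{A_t}(s)=|A\cap(A+s)\cap(A+t)\cap(A+s+t)|$, after which $\sum_s\delta_{A_t}(s)=|A_t|^2=\delta_A(t)^2$ is immediate, whereas you unpack the same count as an explicit bijection.
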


\begin{proof}
Note  $\delta_{A_s}(t)=\delta_{A_t}(s)$, which in common is
$|A\cap(A+s)\cap(A+t)\cap(A+s+t)|$. Thus
\begin{align*}
\sum_sE(A_s, B)&=\sum_s\sum_t\delta_{A_s}(t)\delta_B(t)=\sum_s\sum_t\delta_{A_t}(s)\delta_B(t)\\
&=\sum_t\sum_s\delta_{A_t}(s)\delta_B(t)=\sum_t\delta_A(t)^2\delta_B(t)\\
&\leq\Big(\sum_t\delta_A(t)^3\Big)^{2/3}\cdot\Big(\sum_t\delta_B(t)^3\Big)^{1/3}=E_3(A)^{2/3}\cdot
E_3(B)^{1/3}.
\end{align*}
This finishes the proof.
\end{proof}

\begin{lemma}\label{lemma 25}
Let $A,B$ be any sets. Then
\[E_{1.5}(A)^2\cdot|B|^2\leq \big(\sum_sE(A_s, B)\big)\cdot E(A,A+B).\]
\end{lemma}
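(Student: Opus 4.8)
The plan is to prove the inequality by two applications of Cauchy--Schwarz, bridged by a geometric containment that ties the sumset $A_s+B$ to the set $(A+B)\cap\big((A+B)+s\big)$. The starting observation is that for every $s$ and every pair $(x,b)\in A_s\times B$ one has both $x+b\in A+B$ and $(x+b)-s=(x-s)+b\in A+B$, because $x\in A$ and $x-s\in A$ whenever $x\in A_s=A\cap(A+s)$. Hence
\[A_s+B\subseteq (A+B)\cap\big((A+B)+s\big),\]
so the support of the representation function $r_s(y)=|\{(x,b)\in A_s\times B:x+b=y\}|$ has size at most $\delta_{A+B}(s)$.

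First I would fix $s$ and run Cauchy--Schwarz on $r_s$. Since $\sum_y r_s(y)=|A_s|\,|B|=\delta_A(s)|B|$ and $\sum_y r_s(y)^2=E(A_s,B)$ (the latter because counting quadruples $x_1+b_1=x_2+b_2$ is the same as counting $x_1-x_2=b_2-b_1$), bounding the support by $\delta_{A+B}(s)$ gives the pointwise-in-$s$ estimate
\[\delta_A(s)^2|B|^2=\Big(\sum_y r_s(y)\Big)^2\le \delta_{A+B}(s)\cdot E(A_s,B).\]

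Second, I would take square roots, insert the weight $\delta_A(s)^{1/2}$, and sum over $s$. This produces $\delta_A(s)^{3/2}|B|\le \sqrt{\delta_A(s)\delta_{A+B}(s)}\cdot\sqrt{E(A_s,B)}$, and a second Cauchy--Schwarz in $s$ together with the identity $E(A,A+B)=\sum_s\delta_A(s)\delta_{A+B}(s)$ yields
\[E_{1.5}(A)|B|=\sum_s\delta_A(s)^{3/2}|B|\le \sqrt{\textstyle\sum_s\delta_A(s)\delta_{A+B}(s)}\cdot\sqrt{\textstyle\sum_sE(A_s,B)},\]
and squaring gives the claim. The main obstacle is really the first step: one has to spot the containment $A_s+B\subseteq(A+B)\cap\big((A+B)+s\big)$ and recognize that $\sum_y r_s(y)^2$ is exactly $E(A_s,B)$; once these are in hand, choosing the weight $\delta_A(s)^{1/2}$ so that the two Cauchy--Schwarz factors assemble into $\sum_s E(A_s,B)$ and $E(A,A+B)$ is the only thing to get right.
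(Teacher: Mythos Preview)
Your argument is correct and is essentially the paper's own proof: the first Cauchy--Schwarz is exactly the energy inequality $|A_s|\,|B|\le E(A_s,B)^{1/2}|A_s+B|^{1/2}$ (phrased by you via the representation function $r_s$), the containment $A_s+B\subseteq(A+B)_s$ is the same, and your weighting by $\delta_A(s)^{1/2}$ before the second Cauchy--Schwarz is just the paper's multiplication by $|A_s|^{1/2}$. The only cosmetic difference is that you apply the containment before summing in $s$ whereas the paper applies it after.
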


\begin{proof}
By the Cauchy-Schwarz inequality, \[|A_s|^{1.5}\cdot|B|\leq
E(A_s,B)^{1/2}\cdot|A_s+B|^{1/2}\cdot|A_s|^{1/2}.\] First summing
over all $s\in A-A$, then applying Cauchy-Schwarz again gives
\begin{align*}
E_{1.5}(A)^2\cdot|B|^2&\leq\big(\sum_sE(A_s,
B)\big)\cdot\big(\sum_s|A_s+B|\cdot|A_s|\big)\\
&\leq \big(\sum_sE(A_s,
B)\big)\cdot\big(\sum_s|(A+B)_s|\cdot|A_s|\big)\\
&=\big(\sum_sE(A_s, B)\big)\cdot E(A,A+B),
\end{align*}
where the second inequality is due to the set inclusion relation
$A_s+B\subset (A+B)_s$.  This finishes the proof.
\end{proof}

\section{Proof of Theorem \ref{main theorem}}

This section is mainly devoted to the proof of Theorem \ref{main
theorem}. We first claim
\[
E_2(A)^3\ll |A|^3\cdot E_{1.5}(A)^2,
\]
which follows simply from (see also the proof of Lemma
\ref{corollary 2.3})
\[E_2(A)=\sum_{s:\delta_{A}(s)<\triangle}\delta_{A}(s)^2+
\sum_{s:\delta_{A}(s)\geq\triangle}\delta_{A}(s)^2\ll
\sqrt{\triangle}\cdot E_{1.5}(A)+\frac{|A|^3}{\triangle}.\] Then
applying Lemma \ref{lemma 25} with $B=A$, Lemma \ref{lemma 24} and
Lemma \ref{lemma 2.2}, we get
\[\frac{|A|^{12}}{|A+A|^3}\leq E_2(A)^3\ll|A|^3\cdot|A|\cdot(\log|A|)\cdot|A|\cdot|A+A|^{3/2},\]
which is equivalent to
\[
|A+A|\gg\frac{|A|^{14/9}}{(\log|A|)^{2/9}}.
\]
This finishes the proof of Theorem \ref{main theorem}.

\begin{remark}
Let $A,B$ be convex sets.  We remark that one can establish
\begin{align}|A\pm B|^{9}\gg\frac{|A|^6\cdot|B|^8}{(\log|A|)^{4/3}\cdot(\log|B|)^{8/3}}.\end{align}
To this aim, it suffices to note
\begin{align*}\frac{|A|^2\cdot|B|^2}{|A\pm B|}&\leq E(A,B)=\sum_s\delta_A(s)\cdot\delta_{B}(s)\\&\leq \big(\sum_s\delta_A(s)^{3/2}\big)^{2/3}\cdot
\big(\sum_s\delta_B(s)^{3}\big)^{1/3}\\&=E_{1.5}(A)^{2/3}\cdot
E_3(B)^{1/3},\end{align*} then turning to Lemmas \ref{lemma
2.2}$\sim$\ref{lemma 25} to get the desired inequality.

\end{remark}

%\begin{remark}
%A set of reals $B=\{b_1,\ldots,b_n\}$ labeled in increasing order is
%called \textsf{concave} if there exists a continuous strictly
%concave function $g$ such that $g(i)=b_i$ for every $i$. One can
%trace back the proof of Lemma \ref{Main Lemma} in
%\cite{SchoenShkredov2} to replace convex sets therein by other type
%sets, say for example by concave sets. Consequently, if $A$ is
%convex and $B$ is concave, then we can follow the analysis of the
%previous remark to get
%\[|A+B|^{9}\gg\frac{|A|^6\cdot|B|^8}{(\log|A|)^{4/3}\cdot(\log|B|)^{8/3}}.\]
%This is equivalent to say, that for convex sets $A,B$,
%\begin{align}
%|A-B|^{9}\gg\frac{|A|^6\cdot|B|^8}{(\log|A|)^{4/3}\cdot(\log|B|)^{8/3}}.
%\end{align}
%\end{remark}

\begin{remark} Let $A,B$ be convex sets.
We remark that one can establish
\begin{align}\label{hhhhhhhhhhhhhhhhhhh} |A-A|^{2}\cdot|A\pm
B|^3\gg\frac{|A|^6\cdot|B|^2}{(\log|A|)^{4/3}\cdot(\log|B|)^{2/3}}.
\end{align}
To this aim, it suffices to note from the H\"{o}lder inequality that
\[\frac{|A|^6}{|A-A|}\leq E_{1.5}(A)^{2},\]
then turning to Lemmas \ref{lemma 2.2}$\sim$\ref{lemma 25} to get
the desired inequality.
\end{remark}

%\begin{remark} It might be interesting to extend Lemma \ref{lemma 24} and Lemma \ref{lemma
%25} to high order. For example, it is not difficult to establish new
%energy-type inequalities (see also \cite{SchoenShkredov1})
%\begin{align*}
%\sum_s\sum_tE(A_s, B_t)&\leq E_4(A)^{1/2}\cdot E_4(B)^{1/2},\\
%E_{1.25}(A)^{12}\cdot E_{1.25}(B)^{12} &\leq E_4(A)^{3}\cdot
%E_4(B)^{3}\cdot E_3(A)\cdot E_3(B)\cdot E_3(A+B)^{4}.
%\end{align*}

%\end{remark}

%\begin{remark}
%Similar to (\ref{prototype}) and Lemma \ref{lemma 24}, one can
%establish
%\begin{align}\sum_{s}E\big(A\cap(s+B),
%C\big)=\sum_s\delta_A(s)\delta_B(s)\delta_C(s).
%\end{align}
%\end{remark}

%By H\"{o}lder's inequality,
%\begin{align}
%E(A)^3\leq E_{1.5}(A)^2\cdot E_3(A),\\
%E(B)^3\leq E_{1.5}(B)^2\cdot E_3(B).
%\end{align}
%We also note another two elementary inequalities,
%\begin{align}
%\frac{|A|^2\cdot|B|^2}{|A+B|}&\leq E(A,B),\\
%E(A,B)&\leq E(A)^{1/2}\cdot E(B)^{1/2}.
%\end{align}
%Combining these inequalities we get
%\[\frac{|A|^{12}\cdot|B|^{12}}{|A+B|^6}\leq E_{1.5}(A)^2\cdot E_3(A)\cdot E_{1.5}(B)^2\cdot E_3(B),\]
%which first followed by applying Lemma \ref{lemma 25} and Lemma
%\ref{lemma 24} gives

%\[
%\frac{|A|^{14}\cdot|B|^{14}}{|A+B|^6}\leq E_3(A)^2\cdot
% E_3(B)^2 \cdot E(A,A+B)\cdot E(B,A+B),\]
%then  by  Lemma \ref{lemma 2.2} and Lemma \ref{corollary 2.3} yields
%\[|A+ B|^9\gg\frac{|A|^7\cdot|B|^7}{(\log|A|)^2\cdot(\log|B|)^2}.\]
%\newpage

\section{Proof of Theorem \ref{main theorem 2}}

%This section is devoted to the proof of Theorem \ref{main theorem
%2}.% We first prepare the following generalization of Lemma \ref{Main
%Lemma}.

 \begin{lemma}

Let $A$ be a set of the form $f(Z)$, where $f$ is  a continuous
strictly convex function,  $|Z+Z|\leq M|Z|$. Then for any set $B$
and  any $\tau\geq1$,
\[\big|\{x\in A-B:\delta_{A, B}(x)\geq\tau\}\ll M^3\cdot\frac{|A|\cdot|B|^2}{\tau^3}.\]

\end{lemma}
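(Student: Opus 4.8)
The plan is to treat this as the small-doubling generalization of Lemma~\ref{Main Lemma}, rerunning the Szemer\'{e}di--Trotter argument of Schoen and Shkredov \cite{SchoenShkredov2} for $A=f(Z)$ and isolating the single place at which the index set is assumed to be an interval. Writing $Z=\{z_1<\cdots<z_n\}$ and $A=f(Z)$ (so $|A|=|Z|=n$, as $f$ is injective), the points $(z,f(z))$, $z\in Z$, lie on the strictly convex graph of $f$; strict convexity is what drives the incidence bound, exactly as when $Z=\{1,\dots,n\}$, so the geometric skeleton of the proof is unchanged. The only arithmetic feature of the interval that enters the original estimate is that certain sumsets and difference sets of the index set have size $O(|Z|)$, and the whole point is to replace each such appearance by its Pl\"{u}nnecke--Ruzsa bound under the hypothesis $|Z+Z|\le M|Z|$.

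Concretely, I would first record the Pl\"{u}nnecke--Ruzsa consequence $|kZ-\ell Z|\le M^{k+\ell}|Z|$ of $|Z+Z|\le M|Z|$; in particular $|Z-Z|\le M^2|Z|$ and $|Z+Z+Z|\le M^3|Z|$. Next I would trace the mechanism by which convexity controls representation numbers: for a fixed horizontal gap $g\in Z-Z$ the vertical displacement $f(z+g)-f(z)$ is strictly increasing in $z$, so along each gap there is at most one coincidence. Consequently every representation count produced in the argument is governed not by $|Z|$ itself but by a sumset of $Z$ of the appropriate order, and for an interval all of these are comparable to $|Z|$.

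The exponent bookkeeping is the heart of the matter. The conclusion carries $\tau^{3}$ precisely because the sharp Szemer\'{e}di--Trotter input is a third-moment, triple-representation count: a common value $x$ with $\delta_{A,B}(x)\ge\tau$ contributes triples $(z_1,b_1),(z_2,b_2),(z_3,b_3)$ with $f(z_j)-b_j=x$, and eliminating $x$ leaves the two convexity constraints $f(z_1)-f(z_2)=b_1-b_2$ and $f(z_2)-f(z_3)=b_2-b_3$, which tie the three $z_j$ together through a sumset of $Z$ of order three (such as $Z+Z+Z$ or $2Z-Z$). Under small doubling this set has size at most $M^{3}|Z|=M^{3}|A|$ rather than $O(|A|)$, and substituting this single bound into the Schoen--Shkredov estimate turns $|A|\cdot|B|^2/\tau^3$ into $M^{3}|A|\cdot|B|^2/\tau^3$, which is the claim.

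The main obstacle is to pin down exactly which additive object of $Z$ the incidence count depends on and to verify that its order is exactly three, so that the loss is $M^{3}$ and no more; carelessly producing a higher-order set (for instance $|Z-Z|^2$) would spuriously inflate the exponent to $M^{4}$. A secondary and essentially routine point is to ensure that $f$ is increasing on the convex hull of $Z$, so that $A=f(Z)$ is genuinely ordered and the incidence geometry applies verbatim, and to confirm that the substitution introduces no extra logarithmic factor, the original bound being clean in $\tau$.
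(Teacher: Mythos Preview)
Your high-level plan---rerun the Schoen--Shkredov incidence argument with the index set $\{1,\dots,n\}$ replaced by a set $Z$ with small doubling---is exactly right, and the paper carries it out. But your account of \emph{where} the factor $M^{3}$ enters is off, and with it your proposed mechanism.

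You locate the loss in a third-order sumset of $Z$ (something like $|Z+Z+Z|$ or $|2Z-Z|$) and invoke Pl\"unnecke--Ruzsa to bound it by $M^{3}|Z|$. The paper never forms a third-order sumset and never uses Pl\"unnecke--Ruzsa; only the hypothesis $|Z+Z|\le M|Z|$ itself is used, once. The point set in the incidence configuration is $(Z+Z)\times(A-B)$ and the curve system is $\{G(f)+(z,-b):(z,b)\in Z\times B\}$. Szemer\'edi--Trotter gives $|\mathcal P_{\sigma}|\ll |Z|^{2}|B|^{2}/\sigma^{3}$ for the set of $\sigma$-rich points, exactly as in the convex case. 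The small-doubling hypothesis enters through a popularity step: if $\delta_{A,B}(x)\ge\tau$, take $\tau$ representations $x=f(z_i)-b_i$ and look at the $\tau$ translates $z_i+Z\subset Z+Z$; their total mass is $\tau|Z|$ inside a set of size $\le M|Z|$, so at least $|Z|/2$ abscissae $s\in Z+Z$ lie in $\ge \tau/(2M)$ translates, hence the point $(s,x)$ is $\tau/(2M)$-rich. This means each such $x$ contributes $\ge |Z|/2$ points to $\mathcal P_{\tau/(2M)}$, and applying the Szemer\'edi--Trotter bound at threshold $\sigma=\tau/(2M)$ produces the factor $(2M)^{3}$. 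So the $M^{3}$ is the cube of a \emph{threshold dilution}, not the size of a triple sumset.

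Your ``triple-representation'' heuristic conflates the algebraic third moment with an additive object of order three in $Z$; the convexity constraints you write down, $f(z_1)-f(z_2)=b_1-b_2$ and $f(z_2)-f(z_3)=b_2-b_3$, involve differences of \emph{values} $f(z_j)$, not additive combinations of the $z_j$, so they do not by themselves force a set like $Z+Z+Z$ into the count. If you try to push that route through you will more naturally land on factors like $|Z-Z|^{2}\le M^{4}|Z|^{2}$, exactly the overshoot you warned yourself about. The clean route is the paper's: set up the incidence problem over $Z+Z$, use popularity to drop the richness threshold by $M$, and let the $\tau^{-3}$ do the cubing.
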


\begin{proof}
Without loss of generality, we may assume that $f$ is monotonically
increasing, and $1\ll\tau\leq\min\{|A|, |B|\}$.
 Let $G(f)$ denote the graph of $f$ in the plane. For any
$(\alpha,\beta)\in\mathbb{R}^2$, put
$L_{\alpha,\beta}=G(f)+(\alpha,-\beta).$ Define the pseudo-line
system $\mathcal L=\{L_{z,b}: (z,b)\in Z\times B\}$, and the set of
points $\mathcal P=(Z+Z)\times(A-B).$ By convexity, $|\mathcal
L|=|Z|\cdot|B|=|A|\cdot|B|$.
 Let ${\mathcal P}_{\tau}$ be the set of
points of $\mathcal P$ belonging to at least $\tau$ curves from
$\mathcal L$. By the Szemer\'{e}di-Trotter incidence theorem,
\[\tau\cdot|{\mathcal P}_{\tau}|\ll(|{\mathcal P}_{\tau}|\cdot|Z|\cdot|B|)^{2/3}+|Z|\cdot|B|+|{\mathcal P}_{\tau}|,\]
from which we can deduce (see also \cite{SchoenShkredov2})
\[|{\mathcal P}_{\tau}|\ll\frac{|Z|^2\cdot|B|^2}{\tau^3}.\]

Next, suppose $\delta_{A, B}(x)\geq\tau$. There exist
 $\tau$ distinct elements $\{z_i\}_{i=1}^{\tau}$ from $Z$, $\tau$
distinct elements $\{b_i\}_{i=1}^{\tau}$ from $B$, such that
$x=f(z_i)-b_i\ (\forall i)$. Now we define $Z_i\triangleq z_i+Z \
(\forall i)$ and ${\mathcal
M}_x(s)\triangleq\sum_{i=1}^{\tau}\chi_{Z_i}(s)$, where
$\chi_{Z_i}(\cdot)$ is the characteristic function of $Z_i$. Since
\[(z_i+z,x)=\big(z_i,f(z_i)\big)+(z,-b_i)\in L_{z,b_i}\ \ (\forall z, \forall i),\]
we have $(s,x)\in {\mathcal P}_{{\mathcal M}_x(s)}$. Obviously,
\[\sum_{s\in Z+Z}{\mathcal M}_x(s)=\sum_{i=1}^{\tau}\sum_{s\in Z+Z}\chi_{Z_i}(s)\geq\tau|Z|.\]
Thus by the standard popularity argument,
\[\big|\{s\in Z+Z: {\mathcal M}_x(s)\geq\frac{\tau}{2M}\}\big|\geq\frac{|Z|}{2}.\]
This naturally implies
\[\big|\{x\in A-B:\delta_{A, B}(x)\geq\tau\}\big|\cdot\frac{|Z|}{2}\leq|{\mathcal P}_{\frac{\tau}{2M}}|,\]
and consequently,
\[\big|\{x\in A-B:\delta_{A, B}(x)\geq\tau\}\big|\ll\frac{|{\mathcal P}_{\frac{\tau}{2M}}|}{|Z|}\ll M^3\cdot\frac{|Z|\cdot|B|^2}{\tau^3}
=M^3\cdot\frac{|A|\cdot|B|^2}{\tau^3}.\] This finishes the proof.
\end{proof}

It is rather easy to observe that, any property holds for convex
sets in this note should also hold for sets of the form $f(Z)$,
where $f$ is a continuous strictly convex function,  $|Z+Z|\leq
M|Z|$, with $\gg$ replaced by $\gg_M$.

As applications, let $A$ be a finite set of positive real numbers
with $|AA|\leq M|A|$. Then $A=\exp(Z)$, $Z=\ln A$, $|Z+Z|=|AA|\leq
M|A|=M|Z|$. Consequently, (\ref{ggggggggg}) and
(\ref{hhhhhhhhhhhhhhhhhhh}) hold for such an $A$. This suffices to
prove  Theorem \ref{main theorem 2}. We are done.

\textbf{Acknowledgements.} This work was supported by the NSF of
China (11001174).

\end{document}